\theoremstyle{definition}
\newtheorem{definition}{Definition}[section]
\newtheorem{remark}[definition]{Remark}
\theoremstyle{plain}
\newtheorem{theorem}[definition]{Theorem}
\newtheorem{lemma}[definition]{Lemma}
\newtheorem{corollary}[definition]{Corollary}
\DeclareMathOperator{\rk}{rk}
\DeclareMathOperator{\NS}{NS}
\DeclareMathOperator{\Aut}{Aut}
\DeclareMathOperator{\cha}{char}
\DeclareMathOperator{\Nef}{Nef}
\DeclareMathOperator{\sdeg}{sdeg}
\newcommand{\QQ}{\mathbb{Q}}
\newcommand{\FF}{\mathbb{F}}
\newcommand{\RR}{\mathbb{R}}
\newcommand{\CC}{\mathbb{C}}
\newcommand{\ZZ}{\mathbb{Z}}
\newcommand{\NN}{\mathbb{N}}
\begin{document}

\title[Automorphisms of Salem degree $22$]{Automorphisms of Salem degree $22$ on supersingular K3 surfaces of\\ higher Artin invariant}

\author[Simon Brandhorst]{Simon Brandhorst}

\begin{abstract}
 We give a short proof that every supersingular K3 surface (except possibly in characteristic $2$ with Artin invariant $\sigma=10$) has an automorphism of Salem degree 22. 
 In particular an infinite subgroup of the automorphism group does not lift to characteristic zero. 
 The proof relies on the case $\sigma=1$ and the cone conjecture for K3 surfaces.
\end{abstract}

\maketitle

\section{Introduction}
A Salem number is a real algebraic integer $\lambda>1$ which is conjugate to $1/\lambda$ and all whose other conjugates lie on the unit circle. Its minimal polynomial is called a Salem polynomial.
Salem numbers arise naturally in algebraic geometry as follows: If $X$ is a projective surface over an algebraically closed field $k$ and $f\!\!:X\rightarrow X$ an automorphism, then the characteristic polynomial
\[\chi(f^*|H^2_{\acute{e}t}(X,\QQ_\ell(1))) \quad (\ell\neq \cha k)\]
factors as a product of cyclotomic polynomials and at most one Salem polynomial $s(x)$ \cite{esnault-srinivas:algebraic_entropy}. We call the degree of the Salem factor $s(x)$ the \emph{Salem degree} of $f$. 
Let $H$ be an ample polarization of $X$. Since the order of $f^*$ is finite on 
\[\langle f^{*k}(H) \mid k \in \ZZ \rangle^\perp \subseteq H^2_{\acute{e}t}(X,\QQ_\ell(1))\]
by \cite{esnault-srinivas:algebraic_entropy}, we get that $\ker s(f^*|H^2_{\acute{e}t}(X,\QQ_\ell(1)))$ is contained in the ($\ell$-adic) Néron-Severi group $\NS(X)\otimes \QQ_\ell$ of $X$. In particular, we can bound the Salem degree of an automorphism by the Picard number $\rho(X)$. For a K3 surface $X$ it is at most $\rho(X)\leq h^{1,1}(X)=20$ in characteristic $0$ by Lefschetz' Theorem on (1,1)-classes. However, in positive characteristic supersingular K3 surfaces have $\rho(X)=22$. Indeed:
\begin{theorem}\label{thm:sigma=1}\cite{cantat:dynamical_degrees,esnault-oguiso:non-liftability,esnault-oguiso-yu:salem22,shimada:salem22,schuett:salem22,brandhorst:salem22}
The supersingular K3 surface $X/k$, $k=\overline{k}$, $\cha k>0$, of Artin invariant one has an automorphism of Salem degree 22. 
\end{theorem}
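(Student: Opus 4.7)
The plan is to apply the crystalline Torelli theorem of Ogus for supersingular K3 surfaces, which reduces the existence of automorphisms of $X$ to an existence question for isometries of the Néron--Severi lattice $N := \NS(X)$. For Artin invariant $\sigma = 1$ the lattice $N$ is the unique even lattice of signature $(1,21)$ with discriminant group $(\ZZ/p)^2$ and the standard non-degenerate discriminant form. The goal is then to construct $\varphi \in O(N)$ whose characteristic polynomial is an irreducible Salem polynomial of degree $22$ and which is induced by a geometric automorphism of $X$.

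First I would produce the Salem isometry abstractly. Fix a Salem polynomial $s(x) \in \ZZ[x]$ of degree $22$; such polynomials exist in abundance, e.g.\ via McMullen's lattice-theoretic constructions or via Coxeter elements of suitable hyperbolic Coxeter diagrams on $22$ nodes. The companion matrix of $s(x)$ is then to be realised as an isometry of $N$. A practical route is to build $\varphi$ first on a convenient auxiliary even lattice of rank $22$ and signature $(1,21)$ and transfer to $N$ via Nikulin's theory of primitive embeddings and discriminant-form gluing, arranging along the way that $\varphi$ acts as $\pm \id$ on the discriminant group $N^\vee/N$, a sufficient condition for $\varphi$ to preserve the supersingular period datum of $X$ in Ogus's sense.

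Next I would handle positivity and ampleness. Any Salem isometry in signature $(1,21)$ has an isotropic eigenvector for its real eigenvalue $\lambda > 1$, which can be placed in the closure of a chosen component $\mathcal{P}$ of the positive cone; this forces $\varphi(\mathcal{P}) = \mathcal{P}$. The ample cone of $X$ is a chamber of the Weyl group $W(N)$ generated by reflections in $(-2)$-classes acting on $\mathcal{P}$, so after post-composition with an appropriate $w \in W(N)$ the isometry $w \varphi$ preserves the ample cone. Crystalline Torelli then yields $f \in \Aut(X)$ with $f^* = w \varphi$; since $w$ contributes only cyclotomic factors to $\chi(f^*)$, the Salem factor is $s(x)$, of degree $22$, as required.

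The crux is the first step: producing a Salem isometry on the specific lattice $N$ with controlled action on the discriminant group, uniformly in $p$. This is a delicate lattice-engineering problem and is where the cited works diverge into essentially different strategies, ranging from explicit geometric constructions to Kummer-type reductions to McMullen-style glueings of root systems with a rank-$22$ Salem block. Given such a $\varphi$, the downstream steps (positivity, chamber adjustment, Torelli) form a uniform framework.
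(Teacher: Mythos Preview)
The paper does not contain a proof of this theorem; it is quoted from the cited references and used as the base case ($\sigma=1$) from which the main result (Theorem~\ref{thm:salem22general}) is deduced via the Cone Theorem. So there is no ``paper's own proof'' to compare against. Your outline is, in broad strokes, in the spirit of several of the cited approaches: construct a degree-$22$ Salem isometry of the supersingular Néron--Severi lattice, control its action on the discriminant form, and invoke crystalline Torelli. You are also right that the hard, characteristic-dependent part is the lattice engineering.

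There is, however, a genuine gap in your positivity/ampleness step. If $\varphi\in O(N)$ has irreducible Salem characteristic polynomial of degree~$22$, post-composing with a Weyl element $w$ does \emph{not} preserve the Salem factor: $\chi(w\varphi)$ bears no product relation to $\chi(w)$ and $\chi(\varphi)$, and the claim that ``$w$ contributes only cyclotomic factors to $\chi(f^*)$'' is false in this non-commuting setting. Indeed, elements of the (infinite) Weyl group of a hyperbolic lattice can themselves be Salem, and for such $\varphi\in W(N)$ your recipe returns $w\varphi=\id$. In the cited works this issue is handled differently: either one arranges from the outset that $\varphi$ already preserves a Weyl chamber (by controlling the $(-2)$-classes, or by explicit computation on a concrete model), or one produces the automorphism geometrically (elliptic fibrations, Kummer structures) so that ampleness is automatic. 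A smaller point: crystalline Torelli, as proved by Ogus, carries restrictions in characteristics $2$ and $3$, so a uniform argument for all $p>0$ via Torelli alone needs extra input in those cases.
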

Note that the characteristic polynomial of $f^*$ is stable under (good) specialization by standard comparison theorems. 
This observation leads to the interesting feature that an automorphism of Salem degree 22 is not geometrically liftable to characteristic zero (see \cite{esnault-oguiso:non-liftability} for details).
This is in sharp contrast to the case of non-supersingular K3 surfaces in odd characteristic. There one can allways lift a finite index subgroup of the automorphism group to characteristic zero (cf \cite[Thm. 3.2]{jang:lifting_of_automorphisms}).\\

Supersingular K3 surfaces are classified by their Artin invariant $1 \leq \sigma \leq 10$. For fixed Artin invariant $\sigma$ they form a family of dimension $\sigma-1$, while the supersingular K3 surface of Artin invariant $\sigma=1$ is unique (cf. \cite{rudakov_shafarevic:k3_finite_fields,ogus:83}). 
The main purpose of this note is to extend Theorem \ref{thm:sigma=1} to \emph{all} supersingular K3 surfaces. 
\begin{theorem}[Main Theorem]\label{thm:salem22general}
 Let $Y/k$ be a supersingular K3 surface over an algebraically closed field such that the crystalline Torelli theorem holds for $Y$. Then $Y$ has an automorphism of Salem degree 22.
\end{theorem}
\begin{remark}
Set $p=\cha k$ and $\sigma=\sigma(Y)$. The crystalline Torelli is proven for $p>3$ in \cite[Thm. I]{ogus:83} and for $p=2$ and $\sigma<10$ and for $p=3$ and $\sigma<6$ (at the end of \cite{rudakov_shafarevic:k3_finite_fields}).
For $p=3$ the main theorem is proved in \cite{shimada:salem22}. Hence the only open case left is $p=2$ and $\sigma=10$.
The main step in the proof is a reduction to Theorem \ref{thm:sigma=1}.
\end{remark}
\noindent
In a recent preprint \cite{Yu:salem22} Yu gives an independent proof of the main theorem for $p>3$ using genus one fibrations.
However, I believe the new proof to be of independent interest, as it is shorter and characteristic free. In particular the result for $p=2, \sigma>1$ is new.

\textbf{Acknowledgments.} I thank Hélène Esnault, Víctor González-Alonso, Keiji Oguiso, Matthias Schütt, and Xun Yu for discussions and comments on this work.

\section{Preliminaries}
A lattice $L$ is a finitely generated free abelian group equipped with a nondegenerate, integer valued bilinear form.
It is called even if $x^2 \in 2\ZZ$ for all $x\in L$.
The dual lattice is $L^\vee=\{x \in L \otimes \QQ: x.L\subseteq \ZZ\}$ and the discriminant group $L^\vee\!/L$ of an even lattice $L$ is equipped with the quadratic form 
\[q:L^\vee\!/L \rightarrow \QQ/2\ZZ,\quad  \overline{x} \mapsto x^2 \mod 2\ZZ.\]
A supersingular K3 lattice $N$ is an even lattice of signature $(1,21)$ and discriminant group $N^\vee\!/N\cong \FF_p^{2\sigma}$. If $p=2$, we require furthermore that it is of type I, i.e. $x^2 \in \mathbb{Z}$ for $x \in N^\vee$. Such a lattice is determined up to isometry by $p$ and $\sigma$ (cf. \cite[sect. 1]{rudakov_shafarevic:k3_finite_fields}). 
Let $X$ be a K3 surface defined over an algebraically closed field $ k $ of characteristic $p$. Recall that $X$ is said to be {\em supersingular} if
\[\rho\left(X\right) = \rk \NS\left(X\right) = 22.\]
Then the N\'eron-Severi lattice $\NS\left(X\right)$ is a supersingular K3 lattice for $p=\cha k$ and $1 \leq \sigma \leq 10$ (cf. \cite[sect. 8]{rudakov_shafarevic:k3_finite_fields}). We call $\sigma$ the {\em Artin invariant} of $X$. 

For the readers' convenience we give a proof of the following well known
\begin{lemma}
 There is an embedding 
 $N_{p,\sigma} \hookrightarrow N_{p',\sigma'}$
 of supersingular K3 lattices if and only if $p=p'$ and $\sigma'\leq \sigma$. 
\end{lemma}
\begin{proof}
 The only if part follows from the fact that if $A \subset B$ are two lattices of the same rank, then 
 \[\det A= [B:A]^2 \det B.\]
 In this situation 
 \[A \hookrightarrow B \hookrightarrow B^\vee \hookrightarrow A^\vee\]
 and $B/A$ is a totally isotropic subspace of $A^\vee \!/A$. Now, if $A$ is $2$-elementary of type $I$, then, since $B^\vee \subseteq A^\vee$, so is $B$. 
 Let $p\neq 2$. Then the quadratic space $N_{p,10}^\vee/N_{p,10}\cong \FF_p^{20}$ contains an isotropic line since it is of dimension greater two. As above this line corresponds to an overlattice $N$ of $N_{p,10}$ which is hyperbolic and $|N^\vee\!\!/N|=p^{18}$. Since subquotients of vector spaces are vector spaces, we see that $N^\vee\!\!/N\cong \FF_p^{18}$. Then $N\cong N_{p,9}$ is in fact a supersingular K3 lattice. 
 Continuing in the same way, we get a chain of overlattices
 \[N_{p,10} \subseteq N_{p,9} \subseteq\cdots \subseteq N_{p,1}.\]
 Note that the process stops at $\sigma=1$ since there is no isotropic line in the discriminant group. This is in accordance with the fact that there is no even unimodular lattice of signature $(1,21)$. 
 For $p=2$ the discriminant form is isomorphic to a direct sum of forms of type $q(x,y)=x^2+xy+y^2 \mod 2 \ZZ$ and the existence of an isotropic vector follows as long as there are at least two summands, i.e., $\sigma>1$. 
 Since everything is contained in $N_{p,10}^\vee$, the constructed lattices stay of type I. 
\end{proof} 

Let $L$ be an even lattice of signature $(1,n)$ and denote by
$O^+(L)$ the subgroup of isometries preserving the two connected components of the positive cone.
Set 
$$V_L = \left\{x \in L\otimes\mathbb{R} \,|\, x^2 > 0 \, \text{ and } \, \forall r \in L \text{ with } r^2=-2\colon \left(r,x\right) \neq 0\right\}.$$
According to \cite[Proposition 1.10]{ogus:83}, the set $V_L$ is open and each of its connected components meets $L \subset L\otimes\mathbb{R}$. These connected components of $V_L$ are called {\em chambers} of $V_L$. Each point $r$ of length $-2$ induces an orthogonal reflection 
\[\delta_r: L \rightarrow L \quad x\mapsto x+\langle x.r\rangle r\] 
along the hyperplane $r^\perp$.
The Weyl group $W(L)\subseteq O(L)$ is the group generated by all orthogonal reflections along a $(-2)$-hyperplane.
It acts transitively on the set of chambers. \\

If $L = \NS(X)$ for a K3 surface $X$, then one of the chambers is the ample cone. Its closure is the nef cone $\Nef(X)$. Classes of smooth rational curves are called nodal. By adjunction they are of square $(-2)$ and they are exactly the rays of the effective cone. Note that if $r^2=-2$, then by Riemann-Roch either $r$ or $-r$ is effective but they are not necessarily nodal.

\begin{theorem}[Cone conjecture]\cite[Thm. 6.1]{lieblich_maulik:cone_theorem}\label{thm:cone}
 Let $X$ be a K3 surface over an algebraically closed field $k$. If $X$ is supersingular suppose that crystalline Torelli holds for $X$. 
 Let $\Gamma(X) \subseteq O^+(\NS(X))$ be the subgroup preserving the nef cone.
 Then $\Gamma(X)\cong O^+(\NS(X))/W(\NS(X))$ and
 \begin{enumerate}
  \item The natural map $\Aut(X) \rightarrow \Gamma(X)$ has finite kernel and cokernel. 
  \item The group $\Aut(X)$ is finitely generated.
  \item The action of $\Aut(X)$ on $\Nef(X)$ has a rational polyhedral fundamental domain.
  \item The set of orbits of $\Aut(X)$ in the nodal classes of $X$ is finite.
 \end{enumerate}
\end{theorem}
Over $\CC$ the theorem follows from the strong Torelli theorem by work of Sterk \cite[Thm. 01.]{sterk:finiteness}. Then, for K3 surfaces of finite height in arbitrary characteristic one can lift $X,\NS(X)$ and a finite index subgroup of $\Aut(X)$ to characteristic zero and apply the cone theorem there. For supersingular K3 surfaces one has to use the crystalline Torelli Theorem. 
In this case $\Aut(X) \rightarrow \Gamma$ is injective and its image contains the finite index subgroup $\ker ( \Gamma \rightarrow O(\NS^\vee \! \! /\NS))$. 

\begin{lemma}\cite[p. 169]{salem:powers}
 If $\lambda$ is a Salem number of degree $d$ then $\lambda^n$, $n\in \NN$ is a Salem number of the same degree.
\end{lemma}
\begin{proof}
 Denote the Galois conjugates of $\lambda=\lambda_1$ by $\lambda_i$ $i=1,\dots,n$. Then the Galois conjugates of $\lambda_1^n$ are the $\lambda_i^n$. In particular $\lambda_1^n$ is a Salem number. It remains to check that its conjugates are all distinct. Suppose that $\lambda_i^n=\lambda_k^n$. After applying a Galois conjugation we may assume that $i=1$. In particular, $1<\lambda_1^n=\lambda_k^n$. Now, $|\lambda_k|>1$ is the unique conjugate of absolute value greater one, i.e. $k=1$.
\end{proof}

\begin{corollary}\label{coro:salem_ns}
 The maximum occurring Salem degree of an automorphism of a K3 surface $X$ over an algebraically closed field depends only on the isometry class of $\NS(X)$, given that the cone conjecture holds for $X$. 
\end{corollary}
\begin{proof}
 Since any power of a Salem number of degree $d$ remains a Salem number of this degree, we may pass to a finite index subgroup.
 Combining this with part (1) of the cone conjecture, we get that the maximum occurring Salem degree of an automorphism of $X$ depends only on $\Gamma(X)$. Now, $\Gamma(X)$ depends up to conjugation by an element of the Weyl group only on the isometry class of $\NS(X)$. In particular, the maximal Salem degree of an automorphism of $X$ depends only on $\NS(X)$.
\end{proof}

\section{Proof of the main theorem}
 \begin{lemma}\label{lem:supersingular_embedding}
 Let $N\subseteq L$ be two lattices of the same rank and $G\subseteq O(L)$ a subgroup.
 Then
 \[[G:O(N) \cap G]<\infty\]
 where we view $O(N)$ and $O(L)$ as subgroups of $O(N \otimes \RR)$. 
\end{lemma}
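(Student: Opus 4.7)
The plan is to find a finite-index subgroup of $O(L)$ that is automatically contained in $O(N)$ as well; intersecting with $G$ then gives the conclusion.

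The key observation is that $N \subseteq L$ being full-rank forces $L/N$ to be a finite abelian group. Setting $d = [L:N]$, we have $dL \subseteq N \subseteq L$. Every $g \in O(L)$ preserves $dL$, hence induces an automorphism of the finite group $L/dL$. This yields a homomorphism
\[ \varphi : O(L) \longrightarrow \Aut(L/dL), \]
whose target is finite, so $K := \ker \varphi$ has finite index in $O(L)$.

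Next I would verify that $K$ is contained in $O(N)$ when both are regarded inside $O(N \otimes \RR)$. Indeed, any $g \in K$ satisfies $g(x) - x \in dL$ for every $x \in L$; restricting to $x \in N$ gives $g(x) - x \in dL \subseteq N$, so $g(x) \in N$. Applying the same argument to $g^{-1} \in K$, we obtain $g(N) = N$. Since $g$ preserves the bilinear form on $L$, it preserves the restricted form on $N$, so $g|_N \in O(N)$. Hence $K \subseteq O(L) \cap O(N)$ inside $O(N\otimes \RR)$.

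Combining these two steps, $G \cap K \subseteq G \cap O(N)$, and
\[ [G : G \cap O(N)] \;\leq\; [G : G \cap K] \;\leq\; [O(L) : K] \;<\; \infty. \]
There is no real obstacle here; the only thing to notice is that the property of preserving $N$ is governed by a congruence modulo $dL$, which is controlled by a finite quotient.
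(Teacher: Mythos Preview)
Your proof is correct and follows essentially the same approach as the paper: both use that $dL\subseteq N\subseteq L$ with $d=[L:N]$ finite, consider the reduction map $O(L)\to\Aut(L/dL)$, and show its kernel already preserves $N$. The only cosmetic difference is that the paper restricts $\varphi$ to $G$ from the outset, whereas you define it on all of $O(L)$ and then intersect with $G$ at the end.
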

\begin{proof}
 Since the ranks coincide, the index $n=[L:N]$ is finite and
 \[nL \subseteq N \subseteq L.\]
 Any isometry of $L$ preserves $nL$ hence we get a map
 \[\varphi \colon G \rightarrow \Aut(L/nL).\]
 Set $K=\ker \varphi$, which is a finite index subgroup of $G$. 
 To see that $K\subseteq O(N)$ as well, recall that an isometry $f$ of $O(nL)$ extends to $O(N)$ iff $f(N/nL)=N/nL$. Indeed, $f|L/nL=id|L/nL$ for $f \in K$, by definition.
\end{proof}
The following is a generalization of \cite[Thm. 1.2]{Yu:salem22} where the existence of at least one elliptic fibration on $X$ with infinite automorphism group is assumed. We can drop this condition. 
\begin{theorem}\label{thm:Saleminequality}
Let $X/k,Y/k'$ be two K3 surfaces over algebraically closed fields $k,k'$ satisfying the cone conjecture. 
Suppose that $\rho(X)=\rho(Y)$ and that there is an isometric embedding
\[\iota:\NS(Y)\hookrightarrow \NS(X).\]
Then $\sdeg(X)\leq \sdeg(Y)$ where 
$$\sdeg (X)=\max\{\mbox{Salem degree of }f \mid f \in \Aut(X)\}.$$
\end{theorem}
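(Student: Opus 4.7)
The strategy is to pick an automorphism of $X$ realising $\sdeg(X)$, push its action on $\NS(X)$ through $\iota$ to an isometry of $\NS(Y)$ of the same Salem degree, correct by a Weyl group element so the result preserves the nef cone of $Y$, and finally apply the Cone Theorem to $Y$.

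First, by the Cone Theorem applied to $X$, together with the fact that powers of Salem numbers of degree $d$ are still Salem of degree $d$, I can pick $\varphi \in \Gamma(X) \subseteq O(\NS(X))$ with Salem degree equal to $\sdeg(X)$. Applying the previous Lemma to $L = \NS(X)$, $N = \iota(\NS(Y))$, and $G = \Gamma(X)$, a finite-index subgroup of $\Gamma(X)$ preserves $\iota(\NS(Y))$; passing to a further power of $\varphi$, I may assume $\varphi$ itself does. Since $\rho(X) = \rho(Y)$, the map $\iota \otimes \RR$ is an isomorphism of real quadratic spaces, so the restriction $g := \varphi|_{\iota(\NS(Y))} \in O(\NS(Y))$ shares the characteristic polynomial of $\varphi$ and in particular has Salem degree $\sdeg(X)$.

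It remains to produce from $g$ an element $\gamma \in \Gamma(Y)$ of the same Salem degree; then the Cone Theorem applied to $Y$ will provide, via a finite power of $\gamma$, an automorphism of $Y$ with Salem degree at least $\sdeg(X)$. After replacing $g$ by $-g$ if needed, I have $g \in O^+(\NS(Y))$, the index-two subgroup preserving the positive cone. The Weyl group $W(\NS(Y))$ generated by the $(-2)$-reflections is normal in $O^+(\NS(Y))$, and the nef cone of $Y$ is a fundamental domain for its action on the positive cone, giving the semidirect decomposition $O^+(\NS(Y)) = W(\NS(Y)) \rtimes \Gamma(Y)$. I decompose accordingly $g = w\gamma$ with $w \in W(\NS(Y))$ and $\gamma \in \Gamma(Y)$.

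The main obstacle is verifying that this Weyl correction does not lower the Salem degree, i.e.\ that $\gamma$ retains the Salem polynomial of $g$. My plan here is to work on the hyperbolic space associated to $\NS(Y)$: the translation length of $g$ is $\log \lambda$ where $\lambda$ is its Salem number, while reflections in $W(\NS(Y))$ are involutions of zero translation length. Using the normality of $W(\NS(Y))$ in $O^+(\NS(Y))$, successive powers give $g^n = w_n \gamma^n$ for some $w_n \in W(\NS(Y))$; since translation length is additive on powers and vanishes on $W(\NS(Y))$, I expect the translation length of $\gamma^n$ to match $n\log\lambda$ up to bounded error, forcing $\gamma$ to be Salem with the same Salem number $\lambda$, hence the same Salem degree $\sdeg(X)$.
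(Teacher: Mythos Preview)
Your argument is fine up to the point where you obtain $g\in O^+(\NS(Y))$ with Salem degree $\sdeg(X)$ and write $g=w\gamma$ with $w\in W(\NS(Y))$ and $\gamma\in\Gamma(Y)$. The gap is in the last paragraph: the assertion that translation length ``vanishes on $W(\NS(Y))$'' is false. Individual $(-2)$-reflections are elliptic, but products of reflections in disjoint hyperplanes are loxodromic, so $W$ contains elements of arbitrarily large translation length. In your relation $g^n=w_n\gamma^n$ the element $w_n$ is a product of $n$ conjugates of $w$, and nothing prevents $d(x,w_n x)$ from growing linearly in $n$; hence no comparison between $\tau(\gamma^n)$ and $n\log\lambda$ follows, and you cannot conclude that $\gamma$ has spectral radius $\lambda$, let alone the same Salem polynomial.

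The fix is to exploit something you have already implicitly set up but not used: since $\varphi\in\Gamma(X)$ preserves $\Nef(X)$, and every chamber of the $\NS(X)$-decomposition lies in a unique chamber of the coarser $\NS(Y)$-decomposition, $g$ stabilises the $\NS(Y)$-chamber $C$ containing $\Nef(X)$. Writing $C=w_0\,\Nef(Y)$ with $w_0\in W$, one checks from simple transitivity of $W$ on chambers that $g=w_0\gamma w_0^{-1}$, so $g$ and $\gamma$ are conjugate and share the same characteristic polynomial. This is exactly the paper's approach, phrased slightly differently: the paper pre-composes $\iota$ with a Weyl element so that $\Nef(X)\subseteq\Nef(Y)$ from the start, whence $\Gamma(X)\cap O(\NS(Y))\subseteq\Gamma(Y)$ directly and no decomposition $g=w\gamma$ is needed at all. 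Either way, the Weyl correction is a \emph{conjugation}, not a one-sided multiplication, and that is what preserves the Salem degree.
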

\begin{proof}
 Denote by $\Nef(X)$ and $\Nef(Y)$ the nef cones of $X$ and $Y$.
 Any chamber of the positive cone of $\NS(X)$ is contained in the image of a unique chamber of the positive cone of $\NS(Y)$. Since the Weyl group acts transitively on the chambers, we can find an element $\delta\in W(\NS(X))$ of the Weyl group such that $\Nef(X)\subset \iota'_\RR(\Nef(Y))$ where $\iota'=\delta\circ \iota$. 
 To ease notation we identify $\NS(Y)$ with its image under $\iota'$.
 By the preceding Lemma $[\Gamma(X):\Gamma(X)\cap O(\NS(Y))]$ is finite, and since $\Nef(X) \subseteq \Nef(Y)$, we get that $\Gamma(X) \cap O(\NS(Y)) \subseteq \Gamma(Y)$. Now, by the cone Theorem \ref{thm:cone} and the proof of Corollary \ref{coro:salem_ns}
\begin{align*}
\sdeg(X) = \sdeg(\Gamma(X)) &= \sdeg(\Gamma(X)\cap O(NS(Y)))\\
&\leq \sdeg(\Gamma(Y))= \sdeg(Y).
\end{align*}
\end{proof}

 \begin{proof}[Proof of Theorem. \ref{thm:salem22general}]
  If $X/k$ and $Y/k$ are supersingular K3 surfaces with $\sigma(X)\leq \sigma(Y)$, then $\NS(Y) \hookrightarrow \NS(X)$ by Lemma \ref{lem:supersingular_embedding}. Combining the $\sigma=1$ case (Thm. \ref{thm:sigma=1}) and the previous theorem we get that $22=\sdeg(X)\leq \sdeg (Y)\leq 22$.
 \end{proof}
 
 The converse inequality in Theorem \ref{thm:Saleminequality} is false in general. See \cite[rmk. 7.3]{Yu:salem22} for examples.

\bibliography{salem22}

\begin{thebibliography}{10}

\bibitem{cantat:dynamical_degrees}
J.~Blanc and S.~Cantat, \emph{Dynamical degrees of birational transformations
  of projective surfaces}, J. Amer. Math. Soc. \textbf{29} (2016), no.~2,
  415--471.

\bibitem{brandhorst:salem22}
S.~Brandhorst, \emph{Dynamics on supersingular {K}3 surfaces and automorphisms
  of {Sa}lem degree 22}, Nagoya Math. J.  (2016), 1--15.

\bibitem{esnault-oguiso:non-liftability}
H.~Esnault and K.~Oguiso, \emph{Non-liftability of automorphism groups of a
  {K}3 surface in positive characteristic}, Math. Ann. \textbf{363} (2015), 
no.~3-4,  1187--1206.

\bibitem{esnault-oguiso-yu:salem22}
H.~Esnault, K.~Oguiso, and X.~Yu, \emph{Automorphisms of elliptic {K}3 surfaces
  and {S}alem numbers of maximal degree}, Alg. Geom. \textbf{3} (2016), no.~4,
  496--507.

\bibitem{esnault-srinivas:algebraic_entropy}
H.~Esnault and V.~Srinivas, \emph{Algebraic versus topological entropy for
  surfaces over finite fields}, Osaka J. Math. \textbf{50} (2013), no.~3,
  827--846.

\bibitem{jang:lifting_of_automorphisms}
J.~Jang, \emph{A Lifting of an Automorphism of a {K}3 Surface over Odd
  Characteristic}, Int. Math. Res. Notices  (2016).

\bibitem{lieblich_maulik:cone_theorem}
M.~Lieblich and D.~Maulik, \emph{A note on the cone conjecture for K3 surfaces
  in positive characteristic},  (2011).

\bibitem{ogus:83}
A.~Ogus, \emph{A crystalline {T}orelli theorem for supersingular {$K3$}
  surfaces}, in: Arithmetic and geometry, {V}ol. {II}, Vol.~36 of {Progr.
  Math.}, 361--394, Birkh\"auser Boston, Boston, MA (1983).

\bibitem{rudakov_shafarevic:k3_finite_fields}
A.~N. Rudakov and I.~R. Shafarevich, \emph{Surfaces of type {$K3$} over fields
  of finite characteristic}, in: Current problems in mathematics, {V}ol.~18,
  115--207, Akad. Nauk SSSR, Vsesoyuz. Inst. Nauchn. i Tekhn. Informatsii,
  Moscow (1981).

\bibitem{salem:powers}
R.~Salem, \emph{Power series with integral coefficients}, Duke Math. J.
  \textbf{12} (1945), 153--172.

\bibitem{schuett:salem22}
M.~Sch\"utt, \emph{Dynamics on supersingular {K}3 surfaces}, Comment. Math.
  Helv.  (2016), 705--719.

\bibitem{shimada:salem22}
I.~Shimada, \emph{Automorphisms of supersingular {$K3$} surfaces and {S}alem
  polynomials}, Exp. Math. \textbf{25} (2016), no.~4,  389--398.

\bibitem{sterk:finiteness}
H.~Sterk, \emph{Finiteness results for algebraic {$K3$} surfaces}, Math. Z.
  \textbf{189} (1985), no.~4,  507--513.

\bibitem{Yu:salem22}
X.~Yu, \emph{{E}lliptic fibrations on {K}3 surfaces and {S}alem numbers of
  maximal degree},  (2016).

\end{thebibliography}
\address{Insitut für Algebraische Geometrie, Leibniz Universität Hannover,
	Welfengarten 1, 30167 Hannover, Germany}\\
\email{brandhorst@math.uni-hannover.de}

\end{document}